\renewenvironment{proof}[1][\proofname ]{{\noindent \bfseries #1. }}{\qed \bigskip } 
\def\subjclass#1{{\renewcommand{\thefootnote}{}%
\footnote{\emph{Mathematics Subject Classification (2010):} #1}}}
\newcommand{\R}{{\mathbb R}}
\newcommand{\Z}{{\mathbb Z}}
\newcommand{\e}{\varepsilon}
\newcommand{\supp}{\operatorname{supp}}
\def\subjclass#1{{\renewcommand{\thefootnote}{}%
\footnote{\emph{Mathematics Subject Classification (2010):} #1}}}
\newtheorem{theorem}{Theorem}[section]
\newtheorem{lem}[theorem]{Lemma}
\numberwithin{equation}{section}
\begin{document}

\title[On localization of Schr\"{o}dinger means]{On localization of Schr\"{o}dinger means}

\author{Per Sj\"{o}lin}

\address{Department of Mathematics, KTH Royal Institute of Technology,
  100~44  Stockholm, Sweden}
\email{persj@kth.se}

\keywords{Schr\"{o}dinger equation, localization, Sobolev spaces}
\subjclass{42B99}

    \begin{abstract}
 Localization properties for Schr\"{o}dinger means are studied in dimension higher than one.  
    \end{abstract}

\maketitle

\section{Introduction}\label{sec-intro}

Let $f$ belong to the Schwartz class $\mathcal{S}(\R^n)$ where $n\geq 1$. We define
the Fourier transform $\widehat{f}$ by setting
$$
\widehat{f}(\xi) = \int\limits_{\R^n} e^{-i \xi \cdot x} f(x) dx , \ \ \xi\in\R^n.
$$
For $f\in \mathcal{S}(\R^n)$ we also set
\begin{equation}\label{eq-1}
S_t f(x) = \int\limits_{\R^n} e^{i \xi \cdot x} e^{i t |\xi|^2 } \widehat{f}(\xi) d\xi, \ \ x\in \R^n, \ t\in \R.
\end{equation}
If we set $u(x,t)  = S_t f(x) / (2\pi)^n$, then $u(x,0)= f(x)$
and $u$ satisfies the Schr\"{o}dinger equation 
$i \partial u / \partial t = \Delta u$.

It is well-known that $e^{i|\xi|^2}$ has the Fourier transform $K(x) = c e^{-i|x|^2/4}$,
where $c$ denotes a constant, and $e^{i t |\xi|^2}$ has the Fourier transform
$$
K_t(x) = \frac{1}{t^{n/2}} K\left( \frac{x}{t^{1/2}}  \right), \ x\in \R^n, \ t>0.
$$
One has $S_t f(x) = K_t \star f(x) $ for $f\in \mathcal{S}(\R^n) $ and $t>0$,
and we set
\begin{equation}\label{eq-2}
S_t f(x) = K_t \star f(x) , \ \ t>0,
\end{equation}
for $f\in L^1(\R^n)$. For $f\in L^2(\R^n)$ we define $S_t f$ by formula \eqref{eq-1}.

We introduce Sobolev spaces $H_s= H_s(\R^n)$ by setting
$$
H_s = \{  f\in \mathcal{S}': \ || f||_{H_s}<\infty \}, \ \ s\in \R,
$$
where
$$
|| f ||_{H_s} = \left(  \int_{\R^n} ( 1+|\xi|^2 )^s  | \widehat{f}(\xi) |^2 d \xi  \right)^{1/2}
$$
In the case $n=1$ it is well-known (see Carleson \cite{C} and Dahlberg and Kenig \cite{DC})
that
$$
\lim\limits_{t\to 0} \frac{1}{2\pi} S_t f(x) = f(x)
$$
almost everywhere if $f\in H_{1/4}$. Also it is known that $H_{1/4}$ cannot be replaced
by $H_s$ if $s<1/4$. In the case $n\geq 2$ Sj\"{o}lin \cite{S1} and Vega \cite{V} proved
independently that
$$
\lim\limits_{t\to 0} \frac{1}{(2\pi )^n} S_t f(x) = f(x)
$$
almost everywhere if $f\in H_s$ and $s> 1/2$. This result was improved by Bourgain \cite{B1},
who proved that $f\in H_s(\R^n)$, $s> 1/2 - 1/4n$, is sufficient for convergence almost everywhere.

In the case $n=2$, Du, Guth, and Li \cite{DGL} have recently proved that the condition $s>1/3$ is sufficient.
On the other hand Bourgain \cite{B2} has proved that $s\geq n/2(n+1)$ is necessary for convergence for all $n\geq 2$.

We shall here study localization of Schr\"{o}dinger means and shall first state a result
on localization everywhere (see Sj\"{o}lin \cite{S2}).

\smallskip

\noindent \textbf{Theorem A.} \emph{Assume $n\geq 1$. If $f\in H_{n/2}(\R^n)$ and $f$ has compact support then}
$$
\lim\limits_{t\to 0} S_t f(x) = 0
$$
\emph{for every $x\in \R^n\setminus (\mathrm{supp} f)$}.

\bigskip

It is also proved in \cite{S2} that this result is sharp in the sense that $H_{n/2}$
cannot be replaced by $H_s $ with $s<n/2$. 

We say that one has localization almost everywhere for functions in $H_s$ if for every $f\in H_s$ one has
$$
\lim\limits_{t\to 0} S_t f(x) =0
$$
almost everywhere in $\R^n \setminus (\mathrm{supp} f)$.

In the case $n=1$ Sj\"{o}lin and Soria proved that there is no localization almost everywhere for
functions in $H_s$ if $s<1/4$ (see Sj\"{o}lin \cite{S3}). In fact they proved that there exist
two disjoint compact intervals $I$ and $J$ in $\R$ and a function $f$ which belongs to $H_s$ for every
$s<1/4$, with the properties that $\mathrm{supp} f\subset I$ and for every $x\in J$ one does not have
$$
\lim\limits_{t\to 0} S_t f(x) =0.
$$

In the case $n\geq 2$ Sj\"{o}lin and Soria also proved that one does not have localization
almost everywhere for functions in $H_s(\R^n)$ if $s<1/4$.

We shall here improve this result and prove that there is no localization almost everywhere for 
functions in $H_s(\R^n)$ if $n\geq 2$ and $s<n/2(n+1)$. In fact we shall prove the following theorem.

\begin{theorem}\label{thm-1}
If $n\geq 2$ and $s< n/2(n+1)$ there exist a function $f$ in $H_s(\R^n) \cap L^1(\R^n)$ and a set
$F$ with positive Lebesgue measure such that 
$F\subset \R^n \setminus (\mathrm{supp} f)$ and for every $x\in F$ one does not have 
$\lim\limits_{t\to 0} S_t f(x) =0$.
\end{theorem}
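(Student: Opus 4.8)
The exponent $n/2(n+1)$ in the statement is exactly the threshold in Bourgain's necessity result \cite{B2}: for $s<n/2(n+1)$ almost everywhere convergence $S_tf\to f$ fails, and this is witnessed by an explicit counterexample. The plan is therefore to take Bourgain's construction as a black box and convert a failure of convergence into a failure of localization. The conceptual reason this should be possible is that the Schr\"odinger flow has infinite speed of propagation: data concentrated near the origin can, through its high-frequency part, produce a large response at a point far from the origin after an arbitrarily short time. The whole difficulty is to realize this quantitatively while (i) keeping the data supported in a fixed ball, so that the bad points lie outside the support, and (ii) assembling countably many scales into a single $f\in H_s\cap L^1$.

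The basic tool for moving the bad set away from the support is the Galilean (modulation) identity
\[
S_t\bigl(e^{i\eta\cdot x}g\bigr)(x)=e^{i(\eta\cdot x+t|\eta|^2)}\,S_tg(x+2t\eta),\qquad\text{so that}\qquad\bigl|S_t(e^{i\eta\cdot x}g)(x)\bigr|=\bigl|S_tg(x+2t\eta)\bigr|,
\]
together with the obvious fact that $\supp(e^{i\eta\cdot x}g)=\supp g$. Thus modulation leaves the support untouched but translates the large values of $S_tg$ by $-2t\eta$. Bourgain's example provides, for each large $N$, a function $g_N$ with $\|g_N\|_{H_s}\lesssim1$, a time $\tau_N\to0$ and a set of measure $\gtrsim1$ inside $B(0,1)$ on which $|S_{\tau_N}g_N|\gtrsim\Lambda_N:=N^{\,n/2(n+1)-s}\to\infty$. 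I would first truncate $g_N$ to a fixed ball $B$ containing the bulk of its mass (checking that this perturbs neither the $H_s$-bound nor the lower bound on the bad set), and then modulate by $\eta_N=-x^*/(2\tau_N)$, with $|\eta_N|\sim N$ comparable to the frequency scale of $g_N$, so that the resulting function $f_N$ is still supported in $B$, still satisfies $\|f_N\|_{H_s}\lesssim1$, and now has a set $F_N$ of measure $\gtrsim1$ concentrated near a fixed far point $x^*$ with $\operatorname{dist}(x^*,B)>0$, on which $|S_{\tau_N}f_N|\gtrsim\Lambda_N$.

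It then remains to superpose scales. Choose $N_1<N_2<\cdots$ growing so fast that $\sum_k\Lambda_{N_k}^{-1}<\infty$, put $f_k=\Lambda_{N_k}^{-1}f_{N_k}$ and $f=\sum_k f_k$. Because $\|f_k\|_{H_s}\lesssim\Lambda_{N_k}^{-1}$ with $\sum_k\Lambda_{N_k}^{-1}<\infty$, the series converges absolutely in $H_s$; because each $f_k$ is supported in the fixed ball $B$ and $\|f_k\|_{L^1}\lesssim|B|^{1/2}\|f_k\|_{L^2}\lesssim\Lambda_{N_k}^{-1}$, it also converges in $L^1$ and $\supp f\subset B$. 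For the bad set take $F=\limsup_k F_k$; since the $F_k$ lie in a fixed bounded neighbourhood of $x^*$ and have measure $\gtrsim1$, the reverse Fatou lemma gives $|F|\gtrsim1>0$, and $F\subset\R^n\setminus\supp f$. Finally, for $x\in F$ one has $x\in F_k$ for infinitely many $k$, and for such $k$, $|S_{\tau_k}f(x)|\ge|S_{\tau_k}f_k(x)|-\sum_{j\neq k}|S_{\tau_k}f_j(x)|\gtrsim1-(\text{error})$; since $\tau_k\to0$ this forces $\limsup_{t\to0}|S_tf(x)|>0$, so the limit cannot be $0$.

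The main obstacle is the error term, that is, controlling $\sum_{j\neq k}|S_{\tau_k}f_j(x)|$ for $x\in F_k$. At time $\tau_k$ only the $k$-th packet focuses; the pieces with $j\neq k$ live at widely separated frequency scales and are either not yet focused ($j<k$) or already dispersed ($j>k$), so each should contribute a small dispersive tail. Making this rigorous --- choosing the lacunary sequence $N_k$ (and if necessary the phases or the spatial positions) so that the focusing windows in space and time are genuinely disjoint and the off-diagonal contributions are summably small on $F$ --- is the crux of the argument. The secondary technical point is to verify that the spatial truncation and the modulation preserve both the upper bound on $\|f_N\|_{H_s}$ and the lower bound $\Lambda_N$ on the far set, which amounts to standard but careful wave-packet bookkeeping.
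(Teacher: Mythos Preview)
Your strategy is the paper's: Bourgain's example at a lacunary sequence of scales, Galilean modulation to separate the support from the bad set, a $\limsup$ construction for $F$, and off-diagonal control of $\sum_{j\ne k}|S_{\tau_k}f_j|$ by a dispersive bound for the finer pieces and a small-time bound for the coarser ones. The implementation differs in two respects worth noting. First, instead of treating Bourgain's function as a black box and truncating afterwards, the paper rebuilds it in product form $h_v(x)=f_v(x_1)G_v(x')$, keeping Bourgain's exponential-sum factor $G_v$ in $x'=(x_2,\dots,x_n)$ unchanged but replacing the $x_1$-factor by $f_v(x_1)=e^{-ix_1/v^2}\widecheck g(x_1/v)$ with $\widecheck g$ \emph{compactly supported} (Bourgain's version has compact \emph{Fourier} support instead). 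This gives $\supp h_v\subset\{|x_1|<v\}$ from the outset, the modulation acts only in the single variable $x_1$, and the separation is achieved with $F\subset\{|x_1|>\delta/2\}$ against a slab-shaped support $\{|x_1|<\delta/4\}$; your ``secondary technical point'' (truncation plus checking that the lower bound survives) simply does not arise. Second, your ``crux'' is made concrete via two pointwise estimates valid for $|x_1|>\delta/2$: a kernel bound $|S_t h_v(x)|\lesssim v\,|t|^{-n/2}$ coming from $\|h_v\|_{L^1}\lesssim v^{(n+1)/2}(\log 1/v)^{n-1}$ together with $|K_t|\lesssim|t|^{-n/2}$, used for the terms $i>k$; and a short-time bound $|S_t h_v(x)|\lesssim |t|\,v^{-(4+n/2)}$, which genuinely uses that $x_1$ lies away from $\supp f_v$, for the terms $i<k$. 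The lacunary rate is then fixed as $v_k=2^{-k}v_{k-1}^{\mu}$ with $\mu=\max(n,\,2+n/4)$, chosen exactly so that both tails sum to $O(2^{-k})$. Your outline is compatible with all of this, but in a full proof along your lines you would need the analogues of these two pointwise bounds for your truncated, fully-modulated packets, and to tune the lacunarity to them.
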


\bigskip

To prove this result we shall combine the method in \cite{S3} with an estimate of Bourgain \cite{B2}.

If $A$ and $B$ are numbers we write $A\lesssim B$ if there is a positive constant $C$ such that $A \leq C B$.
If $A\lesssim B$ and $B\lesssim A$ we write $A \sim B$.

We introduce the inverse Fourier transform by setting
$$
\widecheck{f} (x) = (2\pi)^{-n} \int\limits_{\R^n} e^{i\xi \cdot x} f(\xi) d\xi , \ \ x\in \R^n,
$$
for $f\in L^1(\R^n)$.

Also $B(x; r)$ denotes a ball with center $x$ and radius $r$.

\section{Proof of the theorem}

We start by taking $v_1$ such that $0<v_1<1$ and set $v_k = \e_k v_{k-1}^\mu$
for $k=2,3,4,...,$ where $\e_k = 2^{-k}$ and $\mu = \max( n, 2+n/4 )$.
Then one has $v_k<2^{-k}$ for $k\geq 2$ and $(v_k)_1^\infty$ is a decreasing
sequence tending to zero.

We then choose $g\in \mathcal{S}(\R)$ such that $\supp \widecheck{g} \subset (-1,1)$,
$\widecheck{g}(x_1) = 1$ for $|x_1| < 1/2 $, and set
$f_v (x_1) = e^{-i x_1/v^2} \widecheck{g}(x_1/v)$, $0<v<1$, $x_1 \in \R$.

The functions $f_v$ were used in Sj\"{o}lin \cite{S3} to study the localization
problem in the case $n=1$. Also let $\Phi \in \mathcal{S}(\R^{n-1})$ have $\supp \widehat{\Phi} \subset B(0; 1)$
and $\Phi(0)=1$. We then take $R=1/v^2$ and set 
$$
G_v(x') = R^{- (n-1)/4 } \Phi(x') \prod\limits_{j=2}^n \left( \sum\limits_{R/2D < l_j < R/D}
e^{i D l_j x_j}  \right), \ \ 0<v<1,
$$
where $x'=(x_2,...,x_n)\in \R^{n-1}$, $l=(l_2,...,l_n) \in \Z^{n-1}$, and
$D= R^{(n+2)/2(n+1)}$. We may also assume that $\Phi(x') = \psi(x_2)...\psi(x_n)$
for some $\psi \in \mathcal{S}(\R)$.

We then set
$$
h_v(x) = h_v(x_1, x') = f_v(x_1) G_v(x') , \ \ 0<v<1.
$$
In \cite{B2} Bourgain studies functions similar to $h_v$.
However, in \cite{B2} our function $\widecheck{g}$ is replaced by a function
$\varphi$ with the property that $\widehat{ \varphi }$ has compact support.
In our argument it will be important that $\widecheck{g}$ has compact support so that
$$
\supp f_v \subset (-v, v).
$$
We then observe that
$$
S_t h_v(x_1, x') = S_t f_v(x_1) S_t G_v(x').
$$
It is proved in Bourgain \cite{B2}, p.394, that if one assumes $|x|<c$ and $|t|<c/R$
and sets
\begin{equation}\label{eq-3}
t=\frac{x_1}{2R} + \tau
\end{equation}
with $|\tau|< R^{-3/2}/10$, then
\begin{equation}\label{eq-4}
|S_t f_v (x_1)| \gtrsim |\widecheck{g} ( R^{1/2} x_1 - 2t R^{3/2} ) | = |\widecheck{g} (2\tau R^{3/2})| \geq c_0.
\end{equation}
We then take $v=v_k$ for $k=1,2,3,...$, and apply an estimate in \cite{B2}, p.395, namely that there exists
a set $E_k\subset B(0; 1)$ such that for every $x\in E_k$ there exists $t=t_k(x)$
such that
$$
|S_{t_k(x)} G_{v_k }(x')| \gtrsim R^{-(n-1)/4} \prod\limits_{j=2}^n \left| \sum\limits_{l_j}
e^{i D l_j x_j} e^{i D^2 l_j^2 t}  \right| \geq c_0.
$$
Also one has $m E_k \geq c_1 >0$, where $m$ denotes Lebesgue measure.

We then choose $\delta>0$ so small that if
$F_k = E_k \cap \{x;  \ |x_1|>\delta/2 \}$ then one has
$m F_k \geq c_1/2 = c_2 $ for $k=1,2,3,...$ . We may assume that $\delta<1$.

We then set $F= \bigcap\limits_{k=1}^\infty \left( \bigcup\limits_{j=k}^\infty F_j  \right)$
so that $F$ is the set of all $x$ which belong to infinitely many $F_k$. 
Since the sets  $\bigcup\limits_{j=k}^\infty F_j$ decrease as $k$ increases,
one obtains $m F \geq c_0$.

It also follows from \eqref{eq-4} that
$|S_{t_k(x)} f_{v_k}(x)  | \geq c_0  $  for $x\in  F_k$. From \eqref{eq-3} one also concludes that
\begin{equation}\label{eq-5}
|t_k(x) | \sim \frac{1}{R_k} = v_k^2 
\end{equation}
where $R_k = 1/v_k^2$.

We now choose $K$ so large that $v_K< \delta/4$ and set
$$
h(x) = \sum\limits_{k=K}^\infty h_{v_k} (x) =  \sum\limits_{k=K}^\infty f_{v_k } (x_1) G_{v_k}(x').
$$
One has $F\subset \R^n \setminus (\supp h)$ since $\supp h \subset \{x; \ |x_1|<\delta/4 \}$.

If $x=(x_1, x') \in F$ there exists a sequence $(k_j)_{j=1}^\infty$ such that $x\in F_{k_j}$ and
$$
| S_{t_{k_j}(x) } G_{v_{k_j}} (x') | \geq c_0 \ \ \text{ and } \ \ |S_{t_{k_j}(x)} f_{v_{k_j}} (x_1) | \geq c_0.
$$
We shall prove that
$$
| S_{ t_{k_j} (x) } h(x) | \geq c_0
$$
for $j$ large, and also that $h\in H_s \cap L^1$ for $s<n/2(n+1)$. 
It follows that one does not have localization almost everywhere in $H_s$ if $s<n/2(n+1)$.

We shall first estimate $|| h_v ||_{H_s} $. We begin by studying $f_v$ and $G_v$.
According to Sj\"{o}lin \cite{S3}, p. 143, one has
$$
\widehat{f}_v (\xi_1) = v g(v \xi_1 + 1/v) = R^{-1/2} g(R^{-1/2} \xi_1 + R^{1/2})
$$
and for $|\xi_1| \geq A R$ we have 
$$
R^{-1/2} |\xi_1| \geq R^{1/2} A,
$$
where $A$ is a large constant. It follows that $|\xi_1|\geq A R$ implies 
$$
|R^{-1/2} \xi_1 + R^{1/2} | \geq B R^{-1/2} |\xi_1|
$$
and
$$
|\xi_1 | \geq |\xi_1|^{1/2} A_1 R^{1/2}.
$$
Hence
$$
|R^{-1/2}\xi_1  +R^{1/2} |  \geq B_1 |\xi_1|^{1/2}
$$
and
$$
| \widehat{f}_v (\xi_1) | \leq C |\xi|^{-N} \ \  \text{ for } \ \ |\xi_1| \geq A R,
$$
where $N$ is large. It follows that
\begin{equation}\label{eq-6}
\int\limits_{|\xi_1| \geq A R } | \widehat{f}_v (\xi_1)   |^2 ( 1+\xi_1^2 )^s d\xi_1 \leq C R^{-N}
\end{equation}
and it is also easy to see that
\begin{equation}\label{eq-7}
|| f_v ||_2 \sim v^{1/2}.
\end{equation}

We have
$$
\widehat{G}_v (\xi') = R^{-(n-1)/4} \sum\limits_l \widehat{\Phi} (\xi' - D l)
$$
and it follows that $\supp \widehat{G}_v \subset \{ \xi' ; \ |\xi'|\sim R  \}$
and
$$
| \widehat{G}_v (\xi') |^2 = R^{-(n-1)/2} \sum\limits_{l} | \widehat{\Phi}(\xi' - D l) |^2.
$$
Integrating we obtain
$$
|| \widehat{G}_v ||_2^2 = R^{-(n-1)/2} \sum\limits_l || \widehat{\Phi}||_2^2 \sim R^{-(n-1)/2} 
\left( \frac RD \right)^{n-1}.
$$
We have $D=R^{(n+2)/2(n+1)}$ and $R/D = R^{n/2(n+1)}$ and hence
$$
|| G_v||_2^2 \sim R^{-(n-1)/2} R^{n(n-1)/2(n+1)} = R^{-(n-1)/2(n+1)}
$$
and
\begin{equation}\label{eq-8}
|| G_v||_2 \sim R^{-(n-1)/4(n+1)} .
\end{equation}

For $s>0$ one obtains
\begin{multline*}
|| h_v||_{H_s}^2 \sim \int | \widehat{h}_v (\xi) |^2 | \xi |^{2s} d\xi = 
\int\limits_{|\xi'|\sim R} | \widehat{f}_v (\xi_1) |^2 | \widehat{G}_v (\xi') |^2 |\xi|^{2s} d\xi \lesssim \\
\iint_{\substack{|\xi' | \sim R \\ |\xi_1 |\leq A R }}
|\widehat{f}_v(\xi_1)|^2 | \widehat{G}_v (\xi') |^2 R^{2s} d\xi_1 d \xi' + 
\iint_{\substack{|\xi' | \sim R \\ |\xi_1 |\geq A R }}
|\widehat{f}_v(\xi_1)|^2 | \widehat{G}_v (\xi') |^2 |\xi_1|^{2s} d\xi_1 d \xi' = I_1 +I_2.
\end{multline*}
It follows that
\begin{multline*}
I_1 \lesssim R^{2s} \left(  \int_{|\xi_1|\leq A R} |\widehat{f}_v(\xi_1)|^2  d\xi_1  \right)
\left(\int |\widehat{G}_v (\xi')| d\xi'  \right) \lesssim \\
R^{2s} ||  f_v ||_2^2 || G_v||_2^2 \lesssim R^{2s} R^{-1/2} R^{-(n-1)/2(n+1)} = R^{2s -n/(n+1)}.
\end{multline*}

From \eqref{eq-6} and \eqref{eq-8} one also obtains
$$
I_2 \lesssim R^{-N}
$$
and hence
$$
I_1 + I_2 \lesssim R^{2s - n/(n+1)}.
$$

It follows that
$$
|| h_v || _{H_s} \lesssim R^{s-n/2(n+1)} = v^{2 (n/2(n+1) -s)  }.
$$

Since $v_k < \e_k$ and
$$
|| h_k||_{H_s} \leq \sum\limits_{K}^\infty || h_{v_k} ||_{H_s} 
\lesssim \sum\limits_{K}^\infty v_k^{ 2 ( n/2(n+1) -s ) } <\infty,
$$
it follows that $h\in H_s$ if $s<n/2(n+1)$.

We also need some estimates for $S_t f_v$ and $S_t G_v$.
In Sj\"{o}lin \cite{S3} (see Lemmas 3 and 4) it is proved that
\begin{equation}\label{eq-9}
|S_t f_v (x_1) | \lesssim \frac{v}{|t|^{1/2}}
\end{equation}
and
\begin{equation}\label{eq-10}
|S_t f_v(x_1)| \lesssim \frac{|t|}{v^4}
\end{equation}
for $0<v<\delta/4$, $|x_1|\geq \delta/2$, and $0<|t|<1$.
Actually it is assumed in \cite{S3} that $t>0$ but the same proofs work for $t<0$.

We also have the following estimates for $S_t G_v$.

\begin{lem}\label{Lem-1}
For $0<v<\delta/4$, $0<|t|<1$, and $x' \in \R^{n-1}$ one has
\begin{equation}\label{eq-11}
|S_t G_v(x')| \lesssim v^{(n-1)/2} ( \log 1/v )^{n-1} |t|^{-(n-1)/2}
\end{equation}
and
\begin{equation}\label{eq-12}
|S_t G_v(x')| \lesssim v^{-(n-1)^2/2(n+1)}.
\end{equation}
\end{lem}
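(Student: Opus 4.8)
The plan is to reduce everything to a one-variable estimate by exploiting the product structure of $G_v$. Since $\Phi(x')=\psi(x_2)\cdots\psi(x_n)$, we may write
$$
G_v(x')=R^{-(n-1)/4}\prod_{j=2}^n g(x_j),\qquad g(y)=\psi(y)\sum_{R/2D<l<R/D}e^{iDly}.
$$
On the Fourier side the Schr\"odinger multiplier factors as $e^{it|\xi'|^2}=\prod_{j=2}^n e^{it\xi_j^2}$, and $\widehat{G}_v(\xi')=R^{-(n-1)/4}\prod_{j=2}^n\widehat g(\xi_j)$ is a product of one-variable factors; hence $S_t$ applied to $G_v$ is the product of the one-dimensional operators applied to each factor,
$$
S_tG_v(x')=R^{-(n-1)/4}\prod_{j=2}^n S_tg(x_j),
$$
where $S_t$ now denotes the one-variable operator \eqref{eq-1}. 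Since $R^{-(n-1)/4}=v^{(n-1)/2}$, it suffices to prove the two one-dimensional bounds
$$
|S_tg(y)|\lesssim (\log 1/v)\,|t|^{-1/2}\qquad\text{and}\qquad |S_tg(y)|\lesssim v^{-n/(n+1)},
$$
uniformly for $y\in\R$ and $0<|t|<1$: raising these to the power $n-1$ and multiplying by $v^{(n-1)/2}$ gives \eqref{eq-11} and \eqref{eq-12}, the exponent in the latter being $(n-1)/2-n(n-1)/(n+1)=-(n-1)^2/2(n+1)$.

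For the first bound I would use the convolution representation $S_tg=K_t\star g$ from \eqref{eq-2}. Since $\sup_y|K_t(y)|=|c|\,|t|^{-1/2}$ (for $t$ of either sign, as already noted for the estimates of $S_tf_v$), one has $|S_tg(y)|\le\big(\sup_y|K_t(y)|\big)\,||g||_1\lesssim |t|^{-1/2}\,||g||_1$, so it remains to show $||g||_1\lesssim \log 1/v$. Here $g=\psi\cdot P$ with $P(y)=\sum_{R/2D<l<R/D}e^{iDly}$ a one-sided Dirichlet-type kernel: $P$ has period $2\pi/D$ and $N\sim R/D$ frequencies, so $|P(y)|=|\sin(NDy/2)/\sin(Dy/2)|$ and $\int_0^{2\pi/D}|P|\lesssim (\log N)/D$. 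As $\psi\in\mathcal S(\R)$ is essentially constant on each period and rapidly decreasing, summing the periodic contributions weighted by $|\psi|$ yields $||g||_1\lesssim (D/2\pi)\cdot(\log N)/D\cdot ||\psi||_1\lesssim \log N\sim \log 1/v$, using $N\sim R^{n/2(n+1)}=v^{-n/(n+1)}$. This $L^1$ computation, in which the logarithmic factor is created, is the main point of the lemma.

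The second bound is the trivial one. From $\widehat g(\eta)=\sum_{R/2D<l<R/D}\widehat\psi(\eta-Dl)$ and \eqref{eq-1} we get
$$
|S_tg(y)|\le ||\widehat g||_1\le N\,||\widehat\psi||_1\lesssim N\sim v^{-n/(n+1)},
$$
which is the desired one-dimensional estimate. Together with the reduction of the first paragraph this proves \eqref{eq-11} and \eqref{eq-12}. The only genuinely delicate step is the Dirichlet-kernel $L^1$ bound that produces the logarithm; the factorization, the dispersive $L^1\to L^\infty$ bound for $K_t$, and the frequency count are all routine.
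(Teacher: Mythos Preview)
Your proof is correct and follows essentially the same route as the paper: both arguments obtain \eqref{eq-11} from the dispersive bound $|K_t|\lesssim|t|^{-(n-1)/2}$ together with a Dirichlet-kernel $L^1$ estimate producing the logarithm, and obtain \eqref{eq-12} from the trivial $\|\widehat{\,\cdot\,}\|_1$ bound counting the $\sim(R/D)^{n-1}$ frequencies. The only cosmetic difference is that you factor $S_tG_v$ into one-dimensional pieces at the outset, whereas the paper keeps the $(n-1)$-dimensional $G_v$ intact and tiles $\R^{n-1}$ by unit cubes to estimate $\|G_v\|_1$; the underlying computations are the same.
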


\begin{proof}
Choose the integer $p$ so that $|4p - R/D| \leq 4$.
Then one has
$$
\sum\limits_{R/2D < l< R/D} e^{i D l x_j}  = \sum\limits_{2p}^{4p} e^{i D l x_j} + \mathrm{O} (1)
$$
and
$$
\left| \sum\limits_{2p}^{4p} e^{i D l x_j}  \right| = \left| \sum\limits_{-p}^p e^{i D (l+3p) x_j}  \right| =
\left|\sum\limits_{-p}^p e^{i l D x_j}  \right| = D_p(Dx_j),
$$
where $D_p$ denotes the Dirichlet kernel. Setting $y= D x_j$ one obtains
$$
\int\limits_{a}^{a+1/D} |D_p (D x_j)| d x_j = \int\limits_{D a}^{ Da +1} |D_p (y)| dy \frac{1}{D} \lesssim
\frac 1D \log p \sim \frac 1D \log R
$$
for every $a\in \R$. It follows that
$$
\int\limits_{a}^{a+1} | D_p (D x_j)| dx_j \lesssim \log R
$$
for every $a\in \R$.

Letting $Q$ denote the cube $[0,1]^{n-1}$ we obtain
\begin{multline*}
|| G_v||_1 = \int\limits_{\R^{n-1}} |G_v(x') | dx' = \sum\limits_{m\in \Z^{n-1} } \int\limits_{m+Q} |G_v(x')| dx' \lesssim \\
R^{-(n-1)/4} \sum\limits_{m} \frac{1}{1+|m|^N} \int\limits_{m+Q} 
\left( \prod_{j=2}^n  \left| \sum\limits_{l_j} e^{i l_j D x_j} \right|  \right) dx' \lesssim \\
R^{-(n-1)/4} \sum\limits_m \frac{1}{1+|m|^N}  \prod\limits_{j=2}^n 
\left( \int\limits_{m_j}^{m_j+1}  \left| \sum\limits_{l_j} e^{i l_j D x_j}  \right| dx_j  \right) \lesssim \\
R^{-(n-1)/4} \sum\limits_{m} \frac{1}{1+|m|^N} (\log R)^{n-1}
\end{multline*}
and hence
$$
|| G_v||_1 \lesssim R^{-(n-1)/4} (\log R)^{n-1} \sim v^{(n-1)/2} (\log 1/v)^{n-1}.
$$
We have
$$
S_t G_v(x') = \int K_t(x' - y') G_v(y') dy'
$$
where 
$$
|K_t (y')| \lesssim |t|^{-(n-1)/2}
$$
and it follows that
$$
|S_t G_v(x')| \lesssim |t|^{-(n-1)/2} || G_v||_1
$$
and hence we obtain \eqref{eq-11}.

We also have
$$
S_t G_v(x') = \int\limits_{\R^{n-1}} e^{i \xi' \cdot x'} e^{i t |\xi'|^2} \widehat{G}_v (\xi') d\xi' = 
R^{-(n-1)/4} \sum\limits_{l} \int\limits_{\R^{n-1}} e^{i \xi' \cdot x'} e^{i t |\xi'|^2} \widehat{\Phi}(\xi' - D l) d\xi'
$$
and we get
$$
|S_t G_v(x')| \lesssim R^{-(n-1)/4} \sum\limits_{l} ||\widehat{\Phi}||_1 \lesssim R^{-(n-1)/4} (R/D)^{n-1} =
R^{(n-1)^2 /4(n+1)} = v^{-(n-1)^2/2(n+1)}
$$
and the proof of Lemma \ref{Lem-1} is complete.
\end{proof}

Multiplying \eqref{eq-9} and \eqref{eq-11} one obtains
\begin{equation}\label{eq-13}
|S_t h_v(x)| \lesssim v^{(n+1)/2} (\log 1/v)^{n-1} |t|^{-n/2} \lesssim \frac{v}{|t|^{n/2}} = \frac{v}{|t|^\gamma}
\end{equation}
and combining \eqref{eq-10} and \eqref{eq-12} one gets
\begin{equation}\label{eq-14}
|S_t h_v(x)| \lesssim \frac{|t|}{v^{4 + (n-1)^2/2(n+1) } } \lesssim \frac{|t|}{v^{4+n/2}} = \frac{|t|}{v^\beta}
\end{equation}
where $\gamma = n/2$, $\beta = 4+n/2$, $0<v<\delta/4$, $0<|t|<1$, and $|x_1|>\delta/2$.

We remark that it also follows from the above estimates that $h\in L^1(\R^n)$.
In fact it is easy to see that $||f_v||_1 \sim v$ and we have proved that
$$
|| G_v||_1 \lesssim v^{(n-1)/2} (\log 1/v)^{n-1} \lesssim v^{1/4}
$$
and hence $||h_v||_1 \lesssim v^{5/4}$. It follows that
$$
||h||_1 \leq \sum\limits_{K}^\infty ||h_{v_k}||_1 <\infty.
$$
We shall now finish the proof of the following result.

\begin{theorem}
For $x\in F$ one has
$$
|S_{t_{k_j}(x)} h(x)| \geq c_0 >0
$$
for $j$ large. It follows that there is no localization almost everywhere of 
Schr\"{o}dinger means for functions in $H_s(\R^n)$ if $s<n/2(n+1)$.
\end{theorem}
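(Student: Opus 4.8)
The plan is to isolate the single ``diagonal'' term $k=k_j$ in the series $h=\sum_{k\geq K} h_{v_k}$ and to show that it dominates the whole sum. Since $\sum_k \|h_{v_k}\|_1<\infty$ the series converges in $L^1$, and for fixed $t\neq 0$ the map $g\mapsto K_t\star g$ is continuous from $L^1$ to $L^\infty$, so we may act term by term. Writing $t=t_{k_j}(x)$ for brevity, decompose
$$
S_t h(x) = S_t h_{v_{k_j}}(x) + \sum_{\substack{k\geq K \\ k\neq k_j}} S_t h_{v_k}(x).
$$
For the diagonal term the factorization $S_t h_v = S_t f_v\cdot S_t G_v$ together with the two lower bounds available for $x\in F_{k_j}$ gives $|S_t h_{v_{k_j}}(x)| = |S_t f_{v_{k_j}}(x_1)|\,|S_t G_{v_{k_j}}(x')| \geq c_0^2$. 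Thus everything reduces to proving that the off-diagonal sum is $o(1)$ as $j\to\infty$, for then $|S_t h(x)|\geq c_0^2 - o(1)$ stays bounded below by a positive constant once $j$ is large.

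To control the tail I would use the two global bounds \eqref{eq-13} and \eqref{eq-14}. First note these apply to each $h_{v_k}$ with $k\geq K$ at the height $t=t_{k_j}(x)$: indeed $0<v_k<\delta/4$ and $|x_1|>\delta/2$ since $x\in F_{k_j}$, while $|t|\sim v_{k_j}^2<1$ by \eqref{eq-5}. The point is that $v/|t|^\gamma$ is the useful bound when $v_k$ is small (i.e. $k>k_j$), whereas $|t|/v^\beta$ is the useful bound when $v_k$ is large (i.e. $k<k_j$). Accordingly, split the sum at $k=k_j$: for $k>k_j$ apply \eqref{eq-13} to get $|S_t h_{v_k}(x)|\lesssim v_k/|t|^\gamma \sim v_k/v_{k_j}^{\,n}$, and for $K\leq k<k_j$ apply \eqref{eq-14} to get $|S_t h_{v_k}(x)|\lesssim |t|/v_k^\beta \sim v_{k_j}^2/v_k^\beta$.

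It remains to sum these two contributions, and here the choice $\mu=\max(n,\,2+n/4)$ is exactly what is needed. Because $(v_k)$ decreases super-exponentially (each ratio $v_{k+1}/v_k=\e_{k+1}v_k^{\mu-1}$ is tiny), each of the two sums is comparable to its term nearest the diagonal. For $k>k_j$ the sum is dominated by $k=k_j+1$, giving $v_{k_j}^{-n}\sum_{k>k_j}v_k \lesssim v_{k_j}^{-n}v_{k_j+1} = \e_{k_j+1}v_{k_j}^{\,\mu-n}\lesssim 2^{-k_j}$, the last step using $\mu\geq n$. For $k<k_j$ the sum is dominated by $k=k_j-1$, giving $v_{k_j}^2\sum_{k<k_j}v_k^{-\beta}\lesssim v_{k_j}^2 v_{k_j-1}^{-\beta} = \e_{k_j}^2 v_{k_j-1}^{\,2\mu-\beta}\lesssim 2^{-2k_j}$, the last step using $2\mu\geq\beta=4+n/2$, i.e. $\mu\geq 2+n/4$. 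Since $k_j\to\infty$, both bounds tend to $0$.

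The main obstacle is precisely this bookkeeping of the two tails and the verification that $\mu$ makes both exponents $\mu-n$ and $2\mu-\beta$ nonnegative; the super-exponential decay of $(v_k)$ then renders the geometric summation automatic. Combining the estimates, for $j$ large one has $|S_{t_{k_j}(x)}h(x)|\geq c_0^2 - o(1)>0$, which is the asserted lower bound after relabelling the constant. Finally, since this holds for every $x$ in the set $F$, which has positive measure and satisfies $F\subset\R^n\setminus(\supp h)$, and since $h\in H_s\cap L^1$ for every $s<n/2(n+1)$, it follows that $S_t h(x)$ fails to tend to $0$ on a set of positive measure off the support of $h$; that is, there is no localization almost everywhere in $H_s(\R^n)$ when $s<n/2(n+1)$.
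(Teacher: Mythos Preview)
Your proof is correct and follows essentially the same route as the paper: you isolate the diagonal term, split the off-diagonal sum at $k=k_j$, apply \eqref{eq-13} for $k>k_j$ and \eqref{eq-14} for $k<k_j$, and then use $\mu\geq 2\gamma=n$ and $\mu\geq \beta/2=2+n/4$ to make each tail $O(\e_{k_j})$. The paper's argument is identical in structure and in its use of the recursion $v_{k}=\e_k v_{k-1}^\mu$; your write-up merely adds the explicit justification for acting termwise via $L^1$-convergence.
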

\begin{proof}
We have
$$
|S_{t_{k_j}(x)} h(x) | \geq |S_{t_{k_j}(x)} h_{v_{k_j}}(x) | - \left| \sum\limits_{i\neq k_j} S_{t_{k_j}(x)} h_{v_i} (x)  \right|.
$$
The first term on the right hand side is larger than a positive number $c_0$ and it suffices to prove that
the second term is small. For simplicity we write $k$ instead of $k_j$ in the following formulas.

We have $0<v_i \leq v_{i-1}/2$ and $0<v_i \leq 2^{-i}$ and it follows that
\begin{equation}\label{eq-15}
\sum\limits_{i=k+1}^\infty v_i \leq 2 v_{k+1} 
\end{equation}
and one also has
\begin{equation}\label{eq-16}
\sum\limits_{i=K}^{k-1} \frac{1}{v_i^\beta} \lesssim \frac{1}{v_{k-1}^\beta}
\end{equation}
since $1/v_{i-1} \leq 1/2v_i$ implies $1/v_{i-1}^\beta \leq \frac{1}{2^\beta} \frac{1}{v_i^\beta}$.

For $i\geq k+1$ the inequality \eqref{eq-13} and the formula \eqref{eq-5} give
$$
|S_{t_k(x)} h_{v_i}(x) | \lesssim \frac{v_i}{(v_k^2)^\gamma} = \frac{v_i}{v_k^{2\gamma}}
$$
and invoking \eqref{eq-15} we obtain
$$
\sum\limits_{i=k+1}^\infty |S_{t_k(x)} h_{v_i}(x)| \lesssim \frac{v_{k+1}}{v^{2\gamma}_k} \lesssim \e_{k+1},
$$
since $\mu \geq 2\gamma$ and $v_{k+1} = \e_{k+1}v_k^\mu \leq \e_{k+1} v_k^{2\gamma}$.
For $K\leq i \leq k-1$ the inequality \eqref{eq-14} gives 
$$
|S_{t_k(x)} h_{v_i}(x)| \lesssim \frac{|t_k(x)|}{v_i^\beta} \lesssim \frac{v_k^2}{v_i^\beta}
$$
and invoking \eqref{eq-16} one obtains
\begin{equation}\label{eq-17}
\sum\limits_{i=K}^{k-1} | S_{t_k(x)} h_{v_i}(x) | \lesssim v_k^2 \sum\limits_{K}^{k-1} \frac{1}{v_i^\beta}
\lesssim \frac{v_k^2}{v_{k-1}^\beta}.
\end{equation}
Since $\mu \geq \beta/2$ we get $v_k \leq \e_k v_{k-1}^{\beta/2}$ and
$v_k^2 \leq \e_k^2 v_{k-1}^\beta$. Hence the sum in \eqref{eq-17} is majorized by $C \e_k^2$.
Since $\e_k \to 0$ as $k\to \infty$ the proof of the theorem is complete.

\end{proof}

\end{document}